\DeclarePairedDelimiter\floor{\lfloor}{\rfloor}
\theoremstyle{definition}
\theoremstyle{definition}
\newtheorem{rmk}{Remark}
\theoremstyle{definition}
\theoremstyle{definition}\usepackage{amsmath}
\theoremstyle{plain}
\newtheorem{thm}{Theorem}
\theoremstyle{plain}
\theoremstyle{plain}
\newcommand{\Z}{\mathbb{Z}}
\newcommand{\Q}{\mathbb{Q}}
\newcommand{\R}{\mathbb{R}}
\newcommand{\C}{\mathbb{C}}
\newcommand{\p}{\mathcal{P}}
\newcommand{\Ok}{\mathcal{O}_K}
\newcommand{\Tr}{\operatorname{Tr}}
\newcommand{\lp}{\left(}
\newcommand{\rp}{\right)}
\newcommand{\N}{\operatorname{N}}
\begin{document}

\address{Dipartimento di Matematica\\
         Università di Milano\\
         via Saldini 50\\
         20133 Milano\\
         Italy}
\title[minimum discriminant of number fields of signature (2,3)]{The minimum discriminant of number fields of degree 8 and signature (2,3)}
\author[F.~Battistoni]{Francesco Battistoni}

\email{francesco.battistoni@unimi.it}

\keywords{Octic number fields, classification 1for small discriminant.}

\subjclass[2010]{Primary 11R21, 11R29, 11Y40.}

\maketitle

\begin{abstract}
In this paper we describe how to use the algorithmic methods provided by Hunter and Pohst in order to give a complete classification of number fields of degree $8$ and signature $(2,3)$ with absolute discriminant less than a certain bound. The choice of this bound comes from the local corrections given by prime ideals to the lower estimates for discriminants obtained with the Odlyzko-Poitou-Serre method.
\end{abstract}

\section{Introduction}
Let $K$ be a number field of degree $n$, with discriminant $d_K$, and let $r_1$ be the number of real embeddings of $K$ and $r_2$ be the number of couples of complex embeddings, so that $n=r_1+2r_2$.\\
A classical problem asks to establish the minimum value for $|d_K|$ when $K$ ranges in the set of fields with a fixed signature $(r_1,r_2)$. During the last century many methods for answering the question were set: beginning with the classical tools of Geometry of Numbers invented by Minkowski through the analytic estimates involving the Dedekind Zeta functions, due to Odlyzko \cite{odlyzko1990bounds}, Poitou \cite{poitou1977petits} and Serre \cite{serre1986minorations} up to the algorithmic procedures, based on number-geometric ideas, developed by Pohst \cite{pohst1982computation}, Martinet \cite{martinet1985methodes} and Diaz y Diaz \cite{pohst1990minimum} (in collaboration with the previous authors).\\
At the present time, the values for minimum discriminants are known exactly when $n\leq 7$, for any signature, and also when $n=8$, if the signature is either $(8,0)$ or $(0,4)$.\\
For the remaining degrees and signatures, only partial results are known: as an example, if $n=8$ the papers by Cohen, Diaz y Diaz and Olivier \cite{cohen1999tables} and Selmane \cite{selmane1999non} provided minimal values for $|d_K|$, assuming that $K$ is a \emph{non-primitive} number field, i.e. it has subfields different from $\Q$ and $K$. More in detail, the minimum value of $|d_K|$ for a non-primitive number field $K$ with $n=8$ and $(r_1,r_2)=(2,3)$ is equal to $4286875$. These papers also give a complete classification of all non-primitive octic number fields with discriminant less than $6688609$.\\
It has to be remarked that the methods employed in the works aforementioned are such that every possible \emph{primitive} number field with discriminant less than the bound imposed is not considered at all.\\\\
In this paper we deal with the primitive case. Joining the new conclusions
with the already known classification of non-primitive fields we get the following results.
\begin{thm}\label{minimumdisc}
	Let $d_K$ be the discriminant of a number field $K$ with degree $8$ and signature $(2,3)$. Then the minimum value of $|d_K|$ is equal to $4286875$.
\end{thm}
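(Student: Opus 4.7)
The strategy splits the theorem cleanly: for \emph{non-primitive} number fields of degree $8$ and signature $(2,3)$, the minimum $|d_K|=4286875$ is already established in \cite{cohen1999tables, selmane1999non}, together with a complete classification up to $6688609$. It therefore suffices to prove that no \emph{primitive} field $K$ of degree $8$ and signature $(2,3)$ has $|d_K|<4286875$, and this is carried out by an exhaustive algorithmic enumeration in the Hunter--Pohst style.

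First I would fix an explicit search bound $B\ge 4286875$. Starting from the Odlyzko--Poitou--Serre analytic lower estimate for $|d_K|^{1/n}$ specialised to $n=8$ and $(r_1,r_2)=(2,3)$, I would sharpen it by adding the local contributions $\sum_{\mathfrak{p}\mid p}f_{\mathfrak{p}}\log \N\mathfrak{p}$ coming from the ramification of small rational primes, minimised over admissible splitting types. This yields a bound $B$ below which the sharpened analytic estimate already forces $|d_K|\ge B$, so that all primitive fields with $|d_K|<B$ have to be ruled out combinatorially. The bound must be kept as small as possible, since the size of the ensuing enumeration depends critically on it.

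Next I would invoke Hunter's theorem: if $K=\Q(\alpha)$ is primitive, one may choose $\alpha\in\Ok\setminus\Z$ so that $T_2(\alpha)=\sum_{i=1}^{n}|\sigma_i(\alpha)|^2$ is minimal, and Hunter then gives an explicit upper bound for $T_2(\alpha)$ in terms of $n$ and $|d_K|$. Combined with $|d_K|<B$ this bounds the first two non-leading coefficients $a_1,a_2$ of the minimal polynomial $f(x)=x^8+a_1x^7+\dots+a_8\in\Z[x]$; after a translation $\alpha\mapsto \alpha-k$, which does not affect $K$, we may restrict to $0\le a_1<n$. The signature requirement and Newton-type inequalities on the power sums of the roots, together with $|a_8|=|\N(\alpha)|\ge 1$, then bound the remaining coefficients, yielding a finite explicit list of candidate polynomials.

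For each candidate $f$ I would: (i) test irreducibility over $\Q$; (ii) check that $K=\Q[x]/(f)$ has signature $(2,3)$; (iii) compute $\operatorname{disc}(f)$ and extract the admissible values of $d_K$ via the finitely many possible square factors; (iv) verify primitivity, i.e.\ that $K$ has no subfield strictly between $\Q$ and $K$; (v) discard if $|d_K|\ge 4286875$. The expected outcome is that no primitive field survives the filter, which combined with the known non-primitive minimum proves the theorem. The principal obstacle is computational: without aggressive pruning---translating $\alpha$ to minimise $a_1$, exploiting Newton's inequalities coefficient by coefficient, and cutting off entire branches via partial Hadamard-type estimates on truncated polynomials---the number of candidate tuples is prohibitive, so the real delicacy lies in the joint calibration of $B$ and of the Hunter bound so as to keep the enumeration tractable while still forcing the threshold $4286875$.
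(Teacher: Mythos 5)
Your proposal follows essentially the same route as the paper: the non-primitive minimum $4286875$ is quoted from \cite{cohen1999tables,selmane1999non}, the search bound is fixed via Odlyzko--Poitou--Serre estimates with local corrections, and the primitive case is settled by a Hunter--Pohst enumeration of candidate octic polynomials followed by computational filtering. The only nuance is that the paper exploits the norm-$5$ local correction contrapositively --- any field with $|d_K|\le 5726300$ has no prime ideal of norm $2$ through $5$, which is then used as an arithmetic filter on the values $p(\pm k)$ during the enumeration --- rather than merely to calibrate the bound, but this is an implementation detail, not a different argument.
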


\begin{thm}\label{classification}
There are 56 number fields of degree $8$ and signature $(2,3)$ with $|d_K|\leq 5726300$; with the exception of two non-isomorphic fields with $|d_K|=5365963$, every field in the list is uniquely characterized by the value of $|d_K|$.
\end{thm}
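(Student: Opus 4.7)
The plan is to reduce Theorem \ref{classification} to a finite, explicit enumeration problem that combines the existing non-primitive classification with a fresh search over primitive fields. Since the papers of Cohen--Diaz y Diaz--Olivier and Selmane already list every non-primitive octic field of signature $(2,3)$ with $|d_K| \leq 6688609$, I can simply extract from their tables the sublist with $|d_K| \leq 5726300$ and set it aside. The entire task therefore reduces to producing, for the bound $B = 5726300$, an exhaustive list of \emph{primitive} fields $K$ of degree $8$ and signature $(2,3)$ with $|d_K| \leq B$; the bound $B$ itself is chosen exactly so that it lies below the threshold above which the Odlyzko--Poitou--Serre analytic bounds, refined by the local corrections coming from small prime ideals, would rule out the existence of any such field.

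For the primitive search the strategy is the Hunter--Pohst method. Any primitive $K$ is generated by an algebraic integer $\theta \in \Oo_K \setminus \Z$, and one can always choose $\theta$ so that its trace $\Tr(\theta)$ lies in a small set of residues modulo $8$ and so that $\sum_i |\theta_i|^2$ (sum over conjugates) is minimized; Hunter's theorem turns the discriminant bound $|d_K| \leq B$ together with signature $(2,3)$ into an explicit upper bound on $T_2(\theta)=\sum_i|\theta_i|^2$. From this $T_2$-bound and the Newton identities one obtains a finite box of admissible coefficient vectors for the minimal polynomial $f(x) = x^8 + a_1 x^7 + \ldots + a_8$ of $\theta$, with $a_1$ fixed by the trace. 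Each candidate $f$ is then tested: it must be irreducible, have signature $(2,3)$, satisfy $|\mathrm{disc}(f)| \leq B \cdot c^2$ for some integer index $c$, and yield a ring of integers whose field discriminant is at most $B$.

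Once the raw list of candidate polynomials is produced, the filtering steps are: discard reducible polynomials; compute the signature of each; compute $\mathrm{disc}(f)$ and factor out admissible square indices $[\Oo_K : \Z[\theta]]^2$; verify primitivity (no intermediate subfield); and finally, for the surviving fields, compute $d_K$ exactly (for instance with \texttt{pari/gp} or a Round 2/Round 4 implementation) and test isomorphism between fields sharing the same discriminant. Merging this primitive list with the non-primitive sublist yields the 56 fields, and the isomorphism check identifies the unique pair of non-isomorphic fields at $|d_K| = 5365963$.

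The main obstacle will be controlling the size of the Hunter--Pohst search box: for degree $8$ the $T_2$-bound is large enough that naive enumeration of $(a_2,\ldots,a_8)$ is infeasible, so the crux is to layer in Pohst's successive inequalities that bound each $a_k$ in terms of the previously chosen $a_1,\ldots,a_{k-1}$ and in terms of refined estimates on the power sums $s_k=\sum_i \theta_i^k$ (using Newton's identities together with the fact that the $\theta_i$ are distributed among $2$ real and $6$ complex conjugates with known moment constraints). Designing these cuts tightly enough that the remaining tree of candidates can actually be traversed on a computer, while being provably exhaustive, is the delicate part; once this is achieved, the remaining verifications are routine computer algebra.
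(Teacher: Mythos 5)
Your proposal follows essentially the same route as the paper: take the Cohen--Diaz y Diaz--Olivier/Selmane tables for the non-primitive fields, run a Hunter--Pohst enumeration (with Pohst's refinements bounding the higher power sums $T_m$) to capture all primitive candidates with $|d_K|\leq 5726300$, and finish with computer verification of irreducibility, signature, field discriminant and pairwise isomorphism. One small correction: the bound $5726300$ is not a threshold above which fields are ruled out, but the Odlyzko--Poitou--Serre local correction attached to a prime ideal of norm $5$, so its role in the paper is to guarantee that any field below the bound has no prime ideals of norm $2$ through $5$, a fact the algorithm then exploits as norm conditions on values $p(\pm k)$ to prune the search.
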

\noindent
Section 2 gives an account of the formulas used to get a lower bound for the discriminant of the type we are interested in. Results descend from manipulations of Weil's explicit formula for Dedekind Zeta functions and from local corrections to the discriminant given by assuming the existence of prime ideals with fixed norm.\\
Section 3 presents the classical Hunter-Pohst Theorem, which is a geometric-number method for bounding the coefficients of a minimum polynomial with functions depending on the discriminant of the number field.\\
In Section 4 the results of the previous lines are combined in order to describe an algorithm which detects all the polynomials with integer coefficients generating
every number field of degree 8 and signature (2,3) with discriminant less in absolute value than $5726300$. \\
Finally, Section 5 gives an account of the results obtained.
\\
The Matlab and PARI/GP programs used for the computation can be found at \\
\url{www.mat.unimi.it/users/battistoni}, together with the data resulting from the implementation of these programs and the complete table of number fields found.\\
On the website one can also find more detailed recalls about the explicit formulas and many technical aspects of the algorithm which are not to be discussed explicitly in this paper. 
\\
\\
I would like to thank my Ph.D Supervisor, prof. Giuseppe Molteni, for having introduced me to this problem, for every useful and clarifying question and for the support given to my efforts. I thank also dr. Lo\"ic Grenié, who gave me many precious suggestions on how to efficiently use PARI/GP, prof. Scherhazade Selmane, for kindly lending me her very useful tables  of local corrections \cite{selmane1999odlyzko}, and prof. Brian Conrey, with whom I had an interesting conversation about this topic.

\section{Estimates and local corrections}\label{Stime n8 r1 2}
The first informations we need for our process come from the study of the so called \textit{explicit formulas} of the Dedekind Zeta function of a number field $K$. This tool has been widely used for the study of minimum discriminants since \cite{poitou1977petits} and here we just recall the most important facts that will be involved in our discussion.\\
Let us define a function
\begin{equation}\label{Tartar}
	f(x):= \lp \frac{3}{x^3}(\sin x -x\cos x)\rp^2
\end{equation}
which is the square of the Fourier transform of the function
\begin{equation*}
	u(x):=\begin{cases}
		1-x^2 & |x|\leq 1 \\
        0 & \text{elsewhere.}
	\end{cases}
\end{equation*}
\noindent
According to \cite{poitou1977petits} and \cite{odlyzko1990bounds}, the function (\ref{Tartar}) was selected by Luc Tartar in 1973 and since then it has been a convenient function for our problem, thanks to the following estimate:

\begin{thm}
Let $K$ be a number field of degree $n$, signature $(r_1,r_2)$ and discriminant $d_K$. Let $\{\p\subset\Ok\}$ denote the set of non zero prime ideals of $\Ok$, and let $y>0$. Finally, let $\gamma$ be Euler's constant. If $f(x)$ is Tartar's function (\ref{Tartar}), then we have the inequality

\begin{equation}\label{Stima}
		\frac{1}{n}\log|d_K| \geq \gamma + \log 4\pi - L_1(y) -\frac{12\pi}{5n\sqrt{y}} + \frac{4}{n}\sum_{\p\subset\Ok}\sum_{m=1}^{\infty}\frac{\log \N(\p)}{1+(\N(\p))^m}f(m\log (\N(\p))\sqrt{y})
\end{equation}
where 

$$L_1(y):=\sum_{k=1}^{\infty}\frac{1}{2k-1}L\lp\frac{y}{(2k-1)^2}\rp + \frac{r_1}{n}\sum_{k=1}^{\infty}(-1)^{n+1}L\lp\frac{y}{k^2}\rp$$
and

\begin{equation*}
L(y):=-\frac{3}{20y^2} + \frac{33}{10y} + 2 + \lp\frac{3}{80y^3}+\frac{3}{4y^2}\rp\lp\log(1+4y)-\frac{1}{\sqrt{y}}\arctan(2\sqrt{y})\rp.
\end{equation*} 
\end{thm}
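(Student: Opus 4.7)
The plan is to derive this inequality from Weil's explicit formula applied to a carefully chosen non-negative test function. First I would recall the identity, in the form due to Poitou and Serre: for a suitable even Schwartz function $F$ with Fourier transform $\hat F$, one has
$$\sum_{\rho} \hat F(\gamma_\rho) = \hat F(\tfrac{i}{2}) + \hat F(-\tfrac{i}{2}) - F(0)\log|d_K| + A_\infty(F) - 2\sum_{\p}\sum_{m\geq 1}\frac{\log \N(\p)}{\N(\p)^{m/2}}\, F(m\log \N(\p)),$$
where $\rho = 1/2 + i\gamma_\rho$ runs over the non-trivial zeros of $\zeta_K$ and $A_\infty(F)$ is the archimedean contribution coming from the $r_1$ copies of $\Gamma(s/2)$ and the $r_2$ copies of $\Gamma(s)$ in the completed zeta function.

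Next I would specialize to $F$ equal to an appropriate rescaling by $\sqrt{y}$ of the convolution $u*u$, where $u(x)=1-x^2$ on $[-1,1]$. This choice is dictated precisely by the wish that the Fourier transform $\hat F$ be proportional to Tartar's function $f$; in particular $\hat F\geq 0$ everywhere on $\R$. Since all non-trivial zeros of $\zeta_K$ lie in the critical strip $0<\Re\rho<1$ (unconditionally), the left-hand sum $\sum_\rho \hat F(\gamma_\rho)$ is a sum of non-negative real numbers, so it can be discarded to yield an inequality in the correct direction for $\log|d_K|$.

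It then remains to identify each surviving ingredient. The terms $\hat F(\pm i/2)$ evaluated at the points $s=0,1$, together with the poles of the completed zeta function, give the constant $n(\gamma+\log 4\pi)$ after using Lerch's formula and $\Gamma'/\Gamma(1)=-\gamma$. The archimedean part $A_\infty(F)$ splits according to signature: expanding each digamma function $\Gamma'/\Gamma$ in its Hurwitz series and integrating term-by-term against $F$ collapses onto the series $L_1(y)$ of the statement, with the closed-form function $L(y)$ being nothing but the explicit antiderivative of the scalar integrand $\int_0^{\infty} F_y$ weighted by the elementary factor $1/(1+e^{-2t})$. The prime-ideal side becomes, after dividing through by $n$ and the rescaling by $\sqrt y$, exactly the double sum over $\p$ and $m$ with $f(m\log\N(\p)\sqrt{y})$ as claimed; the factor $1+\N(\p)^m$ in the denominator arises after combining the $\N(\p)^{m/2}$ weight of Weil's formula with the symmetric contribution on the dual side. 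Finally, the remainder $-12\pi/(5n\sqrt y)$ is the explicit bound for the tail of the Gamma-function series that does not telescope into $L(y)$.

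The main technical obstacle is the bookkeeping of the archimedean side: producing the closed-form expression for $L(y)$ with its precise rational coefficients $-3/(20y^2)$, $33/(10y)$, $2$, etc. requires computing several definite integrals involving $f$ against $1/(e^{2t}\pm 1)$-type kernels and checking that they assemble into the stated elementary combination of logarithms and arctangents. The positivity argument and the identification of the prime side are by contrast essentially formal, once the test function has been pinned down.
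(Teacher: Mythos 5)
The paper does not actually reprove this theorem: its ``proof'' is a citation of Poitou's paper plus the one-line remark that the inequality follows from Weil's explicit formula for $\zeta_K$ and that Tartar's function ``allows to turn positive some terms''. Your proposal follows that same intended route, but the step you treat as ``essentially formal'' --- discarding the zero sum --- is exactly where the real content lies, and as written it has a genuine gap. You argue that since the nontrivial zeros $\rho=\beta+i\gamma_\rho$ lie in the strip $0<\beta<1$ and $\hat F\geq 0$ on $\R$, the sum $\sum_\rho \hat F(\gamma_\rho)$ is a sum of non-negative reals. Unconditionally, however, the ordinates $\gamma_\rho=(\rho-\tfrac12)/i$ are complex (they are real only under GRH), so the individual terms are neither real nor non-negative, and positivity of $\hat F$ on the real line says nothing about them. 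The correct unconditional argument groups each zero with its conjugate and its mirror under $s\mapsto 1-s$; the contribution of such a group is, up to a positive factor, $\int F(x)\cosh\bigl((\beta-\tfrac12)x\bigr)\cos(\gamma x)\,dx$, and discarding it for every $0<\beta<1$ requires that $x\mapsto F(x)\cosh(ax)$ be of positive type for all $|a|\leq\tfrac12$ --- a strictly stronger property than $\hat F\geq 0$. It can fail even when both $F\geq 0$ and $\hat F\geq 0$: for the triangle function $F(x)=\max(0,1-|x|)$ the group contribution equals $\operatorname{Re}\bigl(2\sinh(z/2)/z\bigr)^2$ with $z=a+i\gamma$, which is negative for $a$ close to $\tfrac12$ and $\gamma$ near $2\pi k$. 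Verifying this stronger positivity for the specific function built from $u(x)=1-x^2$ is precisely the non-formal content behind the choice of Tartar's function, and your sketch does not address it.

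Two further points. First, your bookkeeping of which function sits where is inconsistent: you take the test function $F$ to be a rescaling of $u*u$, so in your normalization the primes are weighted by $u*u$ and the zeros by $\hat F\propto f$; yet the inequality to be proved has $f(m\log \operatorname{N}(\mathcal{P})\sqrt{y})$ at the primes, so the roles must be the other way around (and then the positivity question concerns the analytic continuation of the transform of $f$, with attendant convergence issues, not a pointwise statement about $\hat F$ on $\R$). Second, the quantitative identifications --- the closed form of $L(y)$, the tail term $-12\pi/(5n\sqrt{y})$, and the denominators $1+\operatorname{N}(\mathcal{P})^m$, which in Poitou's derivation arise from summing the basic inequality over the family of dilations $y/(2k-1)^2$ and $y/k^2$ visible in $L_1(y)$ --- are asserted rather than derived; these computations are ``the details in Poitou'' and a complete proof would have to carry them out, since that is where the precise constants of the statement are produced.
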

\begin{proof}
All the details of the proof can be found in \cite{poitou1977petits}. Inequality (\ref{Stima}) is a consequence of the Weil explicit formula applied for the Dedekind Zeta function of $K$, and the choice of Tartar's function allows to turn positive some terms of the formula, obtaining the desired lower bound.
\end{proof}

We want now to use Inequality (\ref{Stima}) in the case of number fields of degree 8 and signature (2,3). 
\\One could try to get estimates which are independent from the terms related to the prime ideals (see \cite{y1980tables}) but, actually, more precise informations follow if one puts some arithmetical conditions.\\
In fact, if we assume that the integer prime numbers have a specified decomposition as products of prime ideals of $K$, then the positive terms related to the prime ideals in (\ref{Stima}) can be directly estimated from below and not simply forgotten; thus one obtains better estimates for $|d_K|$ by algebraic assumptions. These improvements are usually called \emph{local corrections}.
\\
Selmane, in her work \cite{selmane1999odlyzko}, computed the following local corrections to the lower bounds for $|d_K|$, whenever $K$ has $n=8$, $(r_1,r_2)=(2,3)$ and admits a prime ideal $\p$ of norm $\N(\p)$:
\begin{center}
\begin{tabular}{rcc}
&$N(\p)$& $|d_K|>$ \\
\hline
&2 & 11725962\\
&3 & 8336752\\
&4 & 6688609\\
&5 & 5726300\\
&7 & 4682934
\end{tabular}
\end{center}
Consider now a number field $K$ of degree 8 and signature $(2,3)$ with $|d_K|\leq 5726300$. Being this upper bound the local correction given by a prime ideal of norm 5, we immediately get that there are no prime ideals in $K$ with norm between 2 and 5.
This fact obviously implies that any rational integer which is an exact multiple of 2, 3, 4 or 5 cannot be a norm of some element $\alpha\in K$ (we say that $n$ \textit{is an exact multiple of} $q$ if $q|n$ but $q^a \not| n$ for every integer $a\geq 2$).\\
This consideration is of strict importance for the algorithmic procedure described in Section \ref{algoritmo}, p.198.

\section{Hunter-Pohst theorem}
Suppose you have a number field $K$ of degree $n$ and discriminant $d_K$, and let $p(x):=x^n + a_1x^{n-1} +\cdots+a_{n-1}x+a_n$ be the minimum polynomial of a primitive integer $\alpha\in\Ok\setminus\Z$. The following number-geometric methods, which were developed by Pohst in \cite{pohst1982computation}, permit to give a bound for the coefficients of $p(x)$ which depends only on the discriminant of $K$ and the trace of $\alpha$.\\
Recall that the trace and the norm of $\alpha$ are defined as
$$\N(\alpha):=\prod_{i=1}^n \alpha_i \qquad, \qquad \Tr(\alpha):=\sum_{i=1}^n \alpha_i$$
where $m\in\Z$ and $\alpha_i$ is the image of $\alpha$ through the $i$th embedding of $K$ in $\C$.
\\
Define the \emph{symmetric functions}
\begin{equation*}
	S_m:=S_m(\alpha):= \sum_{i=1}^n \alpha_i^m.
\end{equation*}
We know that the following relations occur between the coefficients of $p(x)$ and the symmetric functions:
\begin{align}\label{Congruence}
&a_n=(-1)^n\N(\alpha), \nonumber\\
&S_1=\Tr(\alpha)=-a_1, \nonumber\\
&S_m=-ma_m-\sum_{i=1}^{m-1}a_i S_{m-i}    \text{ }\text{ for } 1<m\leq n, \\
&S_m=-\sum_{i=1}^n a_i S_{m-i} \qquad\text{ for }m>n.\nonumber
\end{align}
If we consider also the symmetric functions with negative exponent we get the following:
\begin{align}
&a_{n-1}=- a_n S_{-1},\nonumber\\
&a_{n-2}=\frac{S_{-1}^2  - S_{-2}}{2}a_n.\nonumber
\end{align} 
The aim is to give a bound for the symmetric functions using the discriminant in order to get a bound for the coefficients.\\
Define now, for every $m\in\Z$, the $m$th \emph{absolute symmetric function}
\begin{equation*}
	T_m:=T_m(\alpha):=\sum_{i=1}^n |\alpha_i|^m.
\end{equation*}
We have the trivial inequality $|S_m|\leq T_m$ and we work with these new functions because they are way easier to study, thanks to the following theorem (which is proved in \cite{pohst1982computation}).

\begin{thm}[Hunter-Pohst]\label{HunterPohst}
Let $K$ be a number field of degree $n$ and discriminant $d$. Then there exists an integer $\alpha\in\Ok\setminus\Z$ which verifies the following inequalities:
\begin{align*}
	&0\leq \Tr(\alpha)\leq \frac{n}{2} \\
    &T_2(\alpha)\leq \frac{(\Tr(\alpha))^2}{n} + \gamma_{n-1}\left|\frac{d}{n}\right|^{1/(n-1)}=:U_2
\end{align*}
where $\gamma_p$ is the $p$-th Hermite constant (the reader who needs its definition can find it in \cite{pohst1997algorithmic}).
\end{thm}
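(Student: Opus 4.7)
My plan is a classical Minkowski-style application of Hermite's theorem to the projection of $\Ok$ onto the trace-zero hyperplane. The key is to choose the Minkowski embedding $\phi:\Ok\to\R^n$ with the scaling for which $T_2(\alpha)=\|\phi(\alpha)\|^2$ and $\phi(\Ok)$ is a full-rank lattice of covolume $\sqrt{|d|}$; that is, for each pair of conjugate complex embeddings $\sigma,\bar\sigma$ we assign the real coordinates $(\sqrt{2}\Ree\sigma(\alpha),\sqrt{2}\Imm\sigma(\alpha))$, and for each real embedding we use $\sigma(\alpha)$. A direct check shows $\|\phi(\alpha)\|^2=\sum_i|\sigma_i(\alpha)|^2=T_2(\alpha)$ and, crucially, $\langle\phi(\alpha),\phi(1)\rangle=\Tr(\alpha)$ with $\|\phi(1)\|^2=n$.

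Next I would project $\phi(\Ok)$ orthogonally onto the hyperplane $H:=\phi(1)^{\perp}$. Since $\phi(\Ok)\cap\R\phi(1)=\Z\phi(1)$ (because $\Ok\cap\Q=\Z$), the image $\Lambda$ is a genuine lattice of rank $n-1$ inside $H\cong\R^{n-1}$, with covolume
\[
\operatorname{covol}(\Lambda)=\frac{\operatorname{covol}(\phi(\Ok))}{\|\phi(1)\|}=\frac{\sqrt{|d|}}{\sqrt{n}}=\sqrt{|d/n|}.
\]
Applying Hermite's inequality to $\Lambda$ produces a nonzero vector $v\in\Lambda$ with
\[
\|v\|^2\le\gamma_{n-1}\operatorname{covol}(\Lambda)^{2/(n-1)}=\gamma_{n-1}\left|\frac{d}{n}\right|^{1/(n-1)}.
\]

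Now lift $v$ to some $\beta\in\Ok$; since $v\neq 0$ we have $\beta\notin\Z$. Writing $\phi(\beta)=c\phi(1)+v$ with $c=\Tr(\beta)/n\in\tfrac{1}{n}\Z$, I observe that replacing $\beta$ by $\beta+k\in\Ok$ for $k\in\Z$ translates $c$ by $k$, while replacing $\beta$ by $-\beta$ flips the sign of $c$ (and of $v$, which stays equally short). Using these two operations I can arrange an element $\alpha\in\Ok\setminus\Z$ with $0\le\Tr(\alpha)\le n/2$ while keeping $\|v\|$ unchanged. Orthogonality of $\phi(1)$ and $H$ then gives
\[
T_2(\alpha)=\|\phi(\alpha)\|^2=c^2\|\phi(1)\|^2+\|v\|^2=\frac{\Tr(\alpha)^2}{n}+\|v\|^2,
\]
which together with the bound on $\|v\|^2$ yields the required inequality $T_2(\alpha)\le U_2$.

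The routine parts are the trace/$\pm$ adjustment and the application of Hermite; the step that demands most care is the covolume computation, namely justifying that the orthogonal projection of $\phi(\Ok)$ modulo $\Z\phi(1)$ is a lattice in $H$ of covolume exactly $\sqrt{|d/n|}$. The subtlety is keeping the Minkowski normalization consistent so that the $\sqrt{n}$ factor from $\|\phi(1)\|$ combines correctly with the standard $\operatorname{covol}(\phi(\Ok))=\sqrt{|d|}$; once this is pinned down, the rest of the argument is essentially forced.
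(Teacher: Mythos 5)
Your proposal is correct: the Minkowski embedding with the $\sqrt{2}$ scaling does give $\|\phi(\alpha)\|^2=T_2(\alpha)$ and covolume $\sqrt{|d|}$, the projection onto $\phi(1)^{\perp}$ is a rank-$(n-1)$ lattice of covolume $\sqrt{|d/n|}$ because $\phi(\Ok)\cap\R\phi(1)=\Z\phi(1)$, and the Hermite bound plus the translate-and-negate normalization of the trace yields exactly the stated inequalities. The paper itself gives no proof (it cites Pohst), and your argument is essentially the classical Hunter--Pohst proof from that reference, phrased via orthogonal projection rather than via the quadratic form $T_2(\alpha)-\Tr(\alpha)^2/n$ on $\Ok/\Z$, which is the same thing.
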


\begin{rmk}
Martinet \cite{martinet1985methodes} gave a useful generalization of this theorem whenever $K$ has a proper subfield of degree $n'$, and this refinement was the key tool used in the classification of non-primitive fields presented in \cite{cohen1999tables} and \cite{selmane1999non}.\\
However, we are now assuming the possible existence of primitive number fields, and so Martinet's theorem cannot be used for that kind of fields. 
\end{rmk}

Theorem \ref{HunterPohst} gives a bound for the trace and the second absolute symmetric function of a certain $\alpha$. The following theorem, which is again due to Pohst and proved in \cite{pohst1982computation}, is crucial for bounding the remaining functions $T_m$.

\begin{thm}\label{ThmMaximum}
Let $T$ and $N$ be two positive constants, and let $n\in\N$ such that $N\leq(T/n)^{n/2}.$ Then, for any $m\in \Z\setminus\{0,2\}$, the function $T_m(x_1,\ldots,x_n):=\sum_{i=1}^n x_i^m$ has an absolute maximum on the compact set

$$\left\{(x_1,\ldots,x_n)\in\R^n\colon \sum_{i=1}^n x_i^2\leq T;\text{ } \prod_{i=1}^nx_i=N; x_i\geq 0 \text{ for }i=1,\ldots,n\right\}$$
and the maximum is attained in a point $(y_1,\ldots,y_n)$ which has at most two different coordinates.
\end{thm}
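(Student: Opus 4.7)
The plan is to prove existence of the maximum by compactness, then pin down the structure of any maximizer with Lagrange multipliers, and finally extract the ``at most two distinct coordinates'' conclusion from a one–variable convexity argument.

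First I would check that the feasible set is non-empty and compact. Non-emptiness follows from the hypothesis $N\leq (T/n)^{n/2}$: the point $(N^{1/n},\dots,N^{1/n})$ satisfies $\prod x_i = N$ and $\sum x_i^2 = n N^{2/n}\leq T$. Closedness and boundedness (in $\R^n$) are clear from $\sum x_i^2\leq T$. Since $\prod x_i = N>0$ forces every coordinate to be strictly positive, AM--GM applied to the remaining coordinates shows that each coordinate is in fact bounded away from $0$ on the feasible set; hence $T_m$ (continuous on $(0,\infty)^n$, including for negative $m$) attains its absolute maximum on this compact set.

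Next I would apply Lagrange multipliers at a maximizer $(y_1,\ldots,y_n)$. All $y_i$ are strictly positive, so the only possibly active constraints are $\prod x_i = N$ (always) and $\sum x_i^2 = T$ (possibly). If $\sum y_i^2 < T$, the Lagrange condition reads $m y_i^{m-1} = \lambda \prod_{j\neq i} y_j = \lambda N/y_i$, so $m y_i^m = \lambda N$ for every $i$; thus all $y_i$ are equal, which is at most one distinct value. If $\sum y_i^2 = T$, the condition becomes
\begin{equation*}
m y_i^{m-1} = \lambda\,\frac{N}{y_i} + 2\mu y_i,
\qquad\text{i.e.}\qquad
m y_i^{m} - 2\mu y_i^{2} - \lambda N = 0.
\end{equation*}
Setting $z_i := y_i^2$, each $z_i$ is a positive root of the single one-variable equation
\begin{equation*}
g(z) := m z^{m/2} - 2\mu z - \lambda N = 0.
\end{equation*}

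The heart of the argument is then to show $g$ has at most two positive roots when $m\in\Z\setminus\{0,2\}$. Differentiating gives $g''(z) = \frac{m^2(m-2)}{4}\, z^{m/2-2}$, which has a constant sign on $(0,\infty)$ precisely because $m\neq 0,2$: it is positive for $m>2$ and negative for $m<0$ or $0<m<2$. Hence $g$ is strictly convex or strictly concave, and in either case has at most two zeros in $(0,\infty)$. Therefore the multiset $\{z_1,\ldots,z_n\}$ contains at most two distinct values, and since $y_i = \sqrt{z_i}$ is a bijection between positive $z$ and positive $y$, so does $\{y_1,\ldots,y_n\}$.

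The routine parts here are the compactness check and the Lagrange computation; the only delicate point I anticipate is the convexity/concavity case split on $g$, where one must verify that the exclusion of $m=0$ and $m=2$ is exactly what guarantees $g''$ does not vanish. This is precisely the reason those two values are excluded in the statement, and it is what turns the Lagrange equations into a sharp ``at most two distinct roots'' conclusion rather than a weaker statement.
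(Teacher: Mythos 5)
Your proof is correct, and it is essentially the standard argument: the paper does not prove Theorem \ref{ThmMaximum} at all but refers to Pohst's original article, whose proof is of the same type --- compactness for existence, Lagrange multipliers at a maximizer (all coordinates strictly positive because $\prod y_i=N>0$), and a count of the positive roots of the resulting one-variable equation $m z^{m/2}-2\mu z-\lambda N=0$, a three-term expression which can vanish at most twice on $(0,\infty)$; your strict convexity/concavity computation for $g$, with the sign of $g''$ governed by $m-2$ and the exclusion of $m=0,2$, is a clean way of making that count (Descartes-type sign-change counting gives the same bound). The one point worth adding a sentence for is the constraint qualification in the case $\sum y_i^2=T$: the multiplier rule in the form you invoke needs the gradients of $\prod x_i$ and $\sum x_i^2$ to be linearly independent at the maximizer. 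If instead they are parallel, then $N/y_i$ is proportional to $y_i$ for every $i$, so all $y_i^2$ coincide and the point has a single distinct coordinate, so the conclusion holds trivially in that degenerate case (equivalently, use the Fritz John conditions and note that a vanishing objective multiplier forces the same degeneracy). With that remark, and the observation you already make that the nonnegativity constraints are never active, the argument is complete.
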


We briefly describe how to use this theorem in order to bound the functions $T_m$'s, as it is presented in \cite{pohst1982computation} and recalled in \cite{y1987petits}.\\
For every integer value of $t$ between 1 and $n-1$ one looks for the least positive root of the equation
\begin{equation}\label{root}
	t(y^{t-n}N)^{2/t} + (n-t)y^2-T=0.
\end{equation}

Let $u$ be this root. Then, if $\alpha$ satisfies the inequalities of Theorem \ref{HunterPohst}, from Theorem \ref{ThmMaximum} we obtain, for every $m\in\Z\setminus\{0,2\}$, the inequality

\begin{equation}\label{AbsoluteBounds}
T_m(\alpha)\leq \max_{1\leq t<n}\{t(u^{t-n}N)^{m/t} + (n-t)u^m\}=:U_m.
\end{equation}

\section{Description of the algorithm}\label{algoritmo}
We want to detect all the number fields $K$ of degree $8$, signature $(2,3)$ and $|d_K|\leq 5726300$: this will be achieved if we are able to construct all the polynomials of degree 8 which have integer coefficients bounded via the values $U_m$'s found with Theorems \ref{HunterPohst} and \ref{ThmMaximum}.\\
Hunter-Pohst's Theorem guarantees that in this large list of polynomials we will find the minimum polynomial of every primitive number field $K$ we are interested in. It is possible though that this set of polynomials may miss the minimum polynomial of some number field $K$ which has elements $\alpha$'s satisfying the bounds imposed by Theorem \ref{HunterPohst} but generating a proper subfield of $K$. However, this causes no difficulty in our work, because the classification of non-primitive number fields was already done in \cite{cohen1999tables} and \cite{selmane1999non}.\\
The polynomials will be generated ranging the values for the symmetric functions $S_m$'s in the intervals $[-U_m,U_m]$; these value must respect the congruence relations (\ref{Congruence}) and the coefficients we create with these functions must be compatible with the arithmetical conditions imposed in Section \ref{Stime n8 r1 2}.
\\
\begin{itemize}

\item[\textbf{Step 0:}]  Put $n=8$ and choose an integer value for $S_1$ between $0$ and $n/2=4$. Put $a_1=-S_1$.\\
Then compute $U_2$ as in Theorem \ref{HunterPohst} using $|d|=5726300$ (the value of $\gamma_7$ is equal to $64^{1/7}$; see \cite{pohst1997algorithmic}).\\
Next, call $T=U_2$ and compute $(T/n)^{n/2}$ as in the hypothesis of Theorem \ref{ThmMaximum}; choose a positive integer $N\leq (T/n)^{n/2}$ and put either $a_8=N$ or $a_8=-N$ (remember that $N$ is the norm of an element of $K$, and so it must be a value compatible with the conditions of Section \ref{Stime n8 r1 2}). 
This choice is well defined because the inequality between geometric and arithmetic means implies $|a_8|\leq (T/8)^4.$\\
Afterwards, compute the least positive root $u$ of Equation (\ref{root}) and the bounds $U_m$ of Equation (\ref{AbsoluteBounds}), for $m$ between 3 and 8 and $m\in\{-1,-2\}.$ We have now set the intervals $[-U_m,U_m]$ in which the symmetric functions will range.\\
Finally, choose a value $c\in\{0,1\}$ : this is done in order to consider only polynomials $p(x)$ such that $p(1)\equiv c$ mod $2$ in a single run.
\\
\item[\textbf{Step 1:}] Put $S_2$ equal to the maximum integer in $[-U_2,U_2]$ which is equal to $-a_1 S_1$ modulo 2 via Equation (\ref{Congruence}): if $k_2\in\{0,1\}$ is the class of $-a_1 S_1$ modulo 2, then
\begin{equation}\label{S2}
	S_2:= 2\floor*{\frac{U_2-k_2}{2}} + k_2
\end{equation}
and put $a_2:=(-S_2-a_1S_1)/2.$\\
Now, put $S_3$ equal to the maximum value in $[-B_3,B_3]$ which is equal to $-a_1 S_2-a_2 S_1$ modulo 3: in the same way, if $k_3\in\{0,1,2\}$ is the class of $-a_1 S_2-a_2 S_1$ modulo 3, then 
\begin{equation}\label{S3}
	S_3:= 3\floor*{\frac{U_3-k_3}{3}} + k_3
\end{equation}
and we put $a_3:=(-S_3-a_1 S_2-a_2 S_1)/3.$\\
Do the same for $S_4$ up to $S_7$, always respecting the congruence relations and using definitions similar to (\ref{S2}) and (\ref{S3}), and create the coefficients $a_4$ up to $a_7$.\\
Finally call $p(x):=x^8+a_1 x^7 +a_2x^6 +\cdots+a_7x+a_8.$
\\

\item[\textbf{Step 2:}] If the just created $p(x)$ is such that $p(1)\neq c$ mod 2, create a new $p(x)$ by increasing $a_7$ of 1 unit and decrease $S_7$ of 7 units.
\\
\item[\textbf{Step 3:}] In this step of the algorithm one must check if the polynomial $p(x)$ just constructed satisfies the following conditions: $p(x)$ is saved if and only if it satisfies all of them:\\
\begin{itemize}
\item[*] $|p(1)|=|\N(\alpha-1)|\leq ((T-2S_1)/8+1)^4$ and it must be a norm compatible with the requests of Section \ref{Stime n8 r1 2}.

\item[*] $a_7/a_8$, being the number which defines $S_{-1}$, must be in $[-U_{-1},U_{-1}]$. Similarly, $(a_7^2/a_8-2a_6)/a_8$ must be in $[-U_{-2},U_{-2}].$

\item[*] $|p(-1)|=|\N(\alpha+1)|\leq ((T+2S_1)/8+1)^4$ and it must be a norm compatible with the requests of Section \ref{Stime n8 r1 2}.

\item[*] $p(k)=\N(k-\alpha)$ must be an admissible norm, evaluating $k$ from 2 to 5. Similarly for $p(-k)$. 

\item[*] The number $-8a_8-a_7S_1-a_6S_2-a_5S_3-a_4S_4-a_3S_5-a_6S_2-a_1S_7$ defines $S_8$ and so it must belong to $[-U_8,U_8]$.
\\
\end{itemize}

\item[\textbf{Step 4:}] In this step we describe how to move on to the next polynomial.
\\
Suppose we have checked $p(x)$. Then the next polynomial is created by increasing $a_7$ of 2 units, which means that $S_7$ is decreased by 14 units (in this way we don't have to check again the condition on $p(1)$ modulo 2). We now have a new polynomial $p(x)$ that must be tested as described in Step 3.\\
This process of construction and testing is iterated until $S_7$ becomes less than the number
\begin{equation*}
	L_7:=-7\floor*{\frac{U_7-(7-k_7)}{7}}-(7-k_7)
\end{equation*}
which is the smallest number in $[-U_7,U_7]$ equal to $k_7$ modulo 7. If $S_7<L_7$ we delete $a_7$ and $S_7$ and one increases $a_6$ of 1 unit, decreasing $S_6$ of 6 units, go back to Step 1 and create new numbers $S_7$ and $a_7$; then apply again the tests and the increasing process for $a_7$ and $S_7$.\\
The number $S_6$ gets lowered of 6 digits every time we repeat the previous sub-step and the process is iterated until $S_6$ becomes less than 
\begin{equation*}
	L_6:=-6\floor*{\frac{U_6-(6-k_6)}{6}}-(6-k_6).
\end{equation*}
If $S_6<L_6$ then one increases $a_5$ of 1 unit, decreasing $S_5$ of 5 units, and compute new $S_6,a_6,S_7$ and $a_7$.\\
The test is then repeated verifying similar conditions from $S_5$ up to $S_2$: the process terminates once you have $S_2$ less than $L_2+2a_1^2/8$ where
\begin{equation*}
	L_2:=-2\floor*{\frac{U_2-(2-k_2)}{2}}-(2-k_2).
\end{equation*}
\end{itemize}
\noindent 
Once this part of the algorithm is over, we have a list of monic polynomials with integer coefficients and this list depends on the chosen values for $a_1$ and $a_8$ and from the parity of the value at 1 of the polynomials.
\\
\begin{itemize}

\item[\textbf{Step 5:}] The polynomials in the list must be examined again, and $p(x)$ will be displayed, together with the discriminant of the number field $K:=\Q[x]/(p(x))$, if and only if $p(x)$ satisfies each one of the following conditions:
\begin{itemize}
\item[*] $p(x)$ must be irreducible over $\Q$.
\item[*] The discriminant of $p(x)$ must be negative (remember that $r_2=3$ and so the sign of the discriminant is negative)
\item[*] The discriminant of the number field generated by $p(x)$ must be greater than $-5726300$.
\end{itemize}
\end{itemize}

\begin{rmk}
Steps from 0 up to 4 were executed using Matlab. The polynomials survived to these processes were saved in .mat files which were translated in .gp files, so that Step 5 could be run in PARI/GP.
\\
Moreover, the algorithm presents many technical details which have not been discussed here, but can be found in \url{www.mat.unimi.it/users/battistoni} together with the data.
\end{rmk}

\section{Results}
Here is a resume of the results collected from the run of the algorithm above for every case:
\begin{itemize}
\item There are 56 number fields $K$ of signature (2,3) with $|d_K|<5726300$: every field is uniquely determined up to isomorphism by the value of its discriminant, unless $d_K=-5365963$, in which case there are two fields having this discriminant (so that Theorem \ref{classification} is proved).
\item The minimum value of $|d_K|$ for $K$ in the list is 4286875, which was already known as the minimum for non-primitive fields of same degree and signature (and so Theorem \ref{minimumdisc} is proved).

\item There are 46 primitive fields, each one with $G=S_8$, and the minimum value of $|d_K|$ for a field $K$ with $n=8$, $(r_1,r_2)=(2,3)$ and $G=S_8$ is 4296211.

\item Every field found has trivial class group.
\end{itemize}

\begin{rmk}
Actually every field in the list was already known, being contained in the Number Fields Database \url{http://galoisdb.math.upb.de} provided by J\"uergen Kl\"uners and Gunter Malle. Nonetheless, no assumption of completeness for these lists was made, while this work proves that these number fields are in fact the only ones existing in the given range for $|d_K|$ and with signature $(2,3)$.
\end{rmk}

\bibliographystyle{plain}

\end{document}